\newtheorem{theorem}{Theorem}[section]
\theoremstyle{plain}
\newtheorem{lemma}{Lemma}[section]
\numberwithin{equation}{section}
\begin{document}
\title[Rigidity]{Rigidity of complete entire self-shrinking solutions to K\"{a}hler-Ricci flow}
\author{Gregory Drugan}
\address{Department of Mathematics, Box 354350\\
University of Washington\\
Seattle, WA 98195}
\email{drugan@math.washington.edu}
\author{Peng LU}
\address{Department of Mathematics\\
University of Oregon\\
Eugene, OR 97403}
\email{penglu@uoregon.edu}
\author{Yu YUAN}
\address{Department of Mathematics, Box 354350\\
University of Washington\\
Seattle, WA 98195}
\email{yuan@math.washington.edu}

\thanks{This work was supported by the National Science Foundation [RTG 0838212 to G.D., DMS 1100966 to Y.Y.]; and the Simons Foundation [229727 to P.L.].}

\begin{abstract}
We show that every complete entire self-shrinking solution on complex Euclidean space to the K\"{a}hler-Ricci flow must be generated from a quadratic potential.
\end{abstract}

\maketitle

\section{Introduction}

In this note, we prove the following result.

\begin{theorem}
\label{thm:main}
Suppose $u$ is an entire smooth pluri-subharmonic solution on ${\mathbb{C}}^{m}$ to the complex Monge-Amp\`{e}re equation
\begin{equation}
\label{eq:cma}
\ln\det(u_{\alpha\bar{\beta}})=\frac{1}{2}x\cdot Du-u.
\end{equation}
Assume the corresponding K\"{a}hler metric $g= ( u_{\alpha\bar{\beta}} )$ is complete. Then $u$ is quadratic.
\end{theorem}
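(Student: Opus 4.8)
The plan is to read equation \eqref{eq:cma} as the soliton identity of a complete shrinking K\"{a}hler--Ricci soliton and to reduce the entire problem to showing that $\phi:=\ln\det(u_{\alpha\bar{\beta}})=\tfrac12 x\cdot Du-u$ is constant. The Ricci curvature of the K\"{a}hler metric $g=(u_{\alpha\bar{\beta}})$ is $R_{\alpha\bar{\beta}}=-\partial_\alpha\partial_{\bar{\beta}}\phi$. Differentiating \eqref{eq:cma} once and contracting with $g^{\alpha\bar{\beta}}=u^{\alpha\bar{\beta}}$, and using $g^{\alpha\bar{\beta}}u_{\alpha\bar{\beta}\gamma}=\partial_\gamma\phi$, a direct computation gives $g^{\alpha\bar{\beta}}\partial_\alpha\partial_{\bar{\beta}}\phi=\tfrac12 x\cdot D\phi$. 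Hence the scalar curvature is $R=-g^{\alpha\bar{\beta}}\partial_\alpha\partial_{\bar{\beta}}\phi=-\tfrac12 x\cdot D\phi$, and $\phi$ is annihilated by the elliptic drift operator $\mathcal{L}:=g^{\alpha\bar{\beta}}\partial_\alpha\partial_{\bar{\beta}}-\tfrac12 x\cdot D$, that is, $\mathcal{L}\phi=0$. The crucial feature is that $\mathcal{L}$ carries \emph{no} zeroth order term.

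Granting that $R\ge 0$, the conclusion is immediate. From $R=-\tfrac12 x\cdot D\phi$ we get $x\cdot D\phi=-2R\le 0$, so along the dilation $\gamma(t):=\phi(e^{t}z)$ one has $\gamma'(t)=(x\cdot D\phi)(e^{t}z)\le 0$; thus $\gamma$ is non-increasing and $\phi(z)=\gamma(0)\le\lim_{t\to-\infty}\gamma(t)=\phi(0)$. Therefore $\phi$ attains its global maximum at the origin, and since $\mathcal{L}\phi=0$ with $\mathcal{L}$ elliptic and free of zeroth order term, Hopf's strong maximum principle forces $\phi\equiv\phi(0)$ on the connected space ${\mathbb{C}}^{m}$. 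Then \eqref{eq:cma} reads $\tfrac12 x\cdot Du-u\equiv c$; setting $v=u+c$ yields the Euler relation $x\cdot Dv=2v$, so $v$ is homogeneous of degree two, and being smooth at the origin it is a quadratic polynomial. Hence $u$ is quadratic, with $(u_{\alpha\bar{\beta}})$ a constant positive Hermitian matrix.

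So everything reduces to the nonnegativity $R\ge 0$, and this is where completeness enters and is the main obstacle. The computation above in fact verifies the (non-gradient) soliton equation $R_{\alpha\bar{\beta}}+\tfrac12(\mathcal{L}_X g)_{\alpha\bar{\beta}}=g_{\alpha\bar{\beta}}$ with Euler field $X=z^\gamma\partial_\gamma+\bar{z}^\gamma\partial_{\bar{\gamma}}$, so $g$ generates a self-similar, hence ancient, solution of the K\"{a}hler--Ricci flow, for which complete ancient solutions have $R\ge 0$. The difficulty is that $X$ is not a gradient field, so $\mathcal{L}$ is not self-adjoint with respect to any weighted volume and the usual integration-by-parts or weighted Liouville route is unavailable; one must instead establish $R\ge 0$ by a maximum-principle argument at infinity, for instance an Omori--Yau type argument exploiting the Ricci lower bound encoded in the soliton equation, or the evolution inequality $\partial_t R=\Delta R+|\mathrm{Ric}|^2\ge \Delta R$ for the associated ancient flow. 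Controlling $R$ at infinity on the complete, possibly non-parabolic manifold without a priori curvature bounds is the crux; once $R\ge 0$ is secured, the maximum-principle argument of the previous paragraph finishes the proof.
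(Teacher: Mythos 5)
Your reduction of the theorem to the single inequality $R\ge 0$ is correct and coincides with the paper's own strategy: the phase $\Phi=\ln\det(u_{\alpha\bar{\beta}})$ satisfies the drift equation \eqref{eq:phase} with no zeroth order term, the identity $R=-\tfrac12 x\cdot D\Phi$ shows that $R\ge0$ forces $\Phi$ to attain its global maximum at the origin (your dilation argument along $\gamma(t)=\Phi(e^{t}z)$ is fine), the strong maximum principle then makes $\Phi$ constant, and Euler's homogeneous function theorem gives the quadratic conclusion. All of that part is sound.

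The genuine gap is that you never actually prove $R\ge 0$: you correctly identify it as the crux and then only list candidate strategies (an Omori--Yau type argument, the evolution inequality for the associated ancient flow) without carrying any of them out. Citing B.-L. Chen's theorem that complete ancient solutions of the Ricci flow have nonnegative scalar curvature would legitimately close the gap --- the paper notes this route explicitly --- but you do not commit to it, and the alternatives you gesture at are not obviously available: the soliton is non-gradient, and there is no a priori lower Ricci bound here (the Ricci curvature equals $-\tfrac12 x\cdot Du_{\alpha\bar{\beta}}$, which is not controlled), so a standard Omori--Yau argument does not apply off the shelf. What the paper actually does is a self-contained elliptic argument: starting from the differential inequality \eqref{ineq:R} for $R$, which carries the drift term $\tfrac12 x\cdot DR$ and the good term $-\tfrac1m R^{2}$, it considers $\tilde R=\phi(\rho/A)R$ for a cutoff $\phi$ in the distance function, and at a negative interior minimum of $\tilde R$ it controls the term $\phi^{\prime}\,\Delta_{g}\rho\, R/A$ by the pointwise Laplacian comparison of Lemma~\ref{lemma:dist:ineq} --- a Perelman-type estimate whose own drift term $\tfrac12 x\cdot D\rho$ is absorbed together with the drift in \eqref{ineq:R} --- handling minima in the cut locus by Calabi's trick via Lemma~\ref{lemma:dist:ineq:gen}. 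This yields the effective bound $R\ge -C(m,\rho_{0})/A^{2}$ on $B_{g}(0,A)$, and letting $A\to\infty$ gives $R\ge0$. Without this argument (or an explicit appeal to Chen's theorem), your proof is incomplete precisely at the step you yourself flag as the main obstacle.
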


Any solution to~(\ref{eq:cma}) leads to a self-shrinking solution $v(x,t)  =-tu( x/\sqrt{-t} )$ to a parabolic complex Monge-Amp\`{e}re equation
\[
v_{t}=\ln\det(v_{\alpha\bar{\beta}})
\]
in $\mathbb{C}^{m} \times (-\infty,0),$ where $z^{\alpha}=x^{\alpha}+\sqrt{-1}x^{m+\alpha}.$ Note that the above equation of $v$ is the potential equation of the K\"{a}hler-Ricci flow $\partial_{t}g_{\alpha\bar{\beta}}=-R_{\alpha\bar{\beta}}.$ In fact, the corresponding metric $( u_{\alpha\bar{\beta}} )$ is a shrinking K\"{a}hler-Ricci (non-gradient) soliton.

Assuming a certain inverse quadratic decay of the metric--a specific completeness assumption--Theorem \ref{thm:main} has been proved in \cite{CCY}. Similar rigidity results for self-shrinking solutions to Lagrangian mean curvature flows were obtained in \cite{HW}, \cite{CCY}, and \cite{DX}.

The idea of our argument, as in \cite{CCY}, is still to force the phase $\ln\det(u_{\alpha\bar{\beta}})$ in equation (\ref{eq:cma}) to attain its global maximum at a finite point. As this phase satisfies an elliptic equation without the zeroth order terms (see (\ref{eq:phase}) below), the strong maximum principle implies the constancy of the phase. Consequently, the homogeneity of the self-similar terms on the right hand side of equation~(\ref{eq:cma}) leads to the quadratic conclusion for the solution.

However, the difficulty of the above argument lies in the first step: Here we cannot construct a barrier as in \cite{CCY}, which requires the specific inverse quadratic decay of the metric, to show the phase achieves its maximum at a finite point. The new observation is that the radial derivative of the phase, which is the negative of the scalar curvature of the metric~(\ref{eq:radial}), is in fact nonpositive; hence the phase value at the origin is its global maximum. The nonnegativity of the scalar curvature is a result of B.-L. Chen \cite{BLChen}, as the induced metric $g(x,t) = \left( u_{\alpha\bar{\beta}} (x/\sqrt{-t}) \right)$ is a complete ancient solution to the (K\"{a}hler-)Ricci flow. Here we provide a direct elliptic argument for the nonnegativity of the scalar curvature for the complete self-shrinking solutions (in Section 3, after necessary preparation in Section 2, where a pointwise approach to Perelman's upper bound of the Laplacian of the distance \cite{P} is also included). Heuristically one sees the minimum of the scalar curvature is nonnegative from its inequality~(\ref{ineq:R}); it is definitely so if the minimum is attained at a finite point. Note that a thorough study of the lower bound of scalar
curvatures of the gradient Ricci solitons is presented in~\cite[Chap.27]{BookRicci4}.

\section{Preliminary results}

For the potential $u$ of the K\"{a}hler metric $g= ( g_{\alpha\bar{\beta}} ) = (u_{\alpha\bar{\beta}})$ on ${\mathbb{C}}^{m}$, we denote the phase by $\Phi=\ln\det(u_{\alpha\bar{\beta}})$. Then the Ricci curvature is given by $R_{\alpha\bar{\beta}}=-\frac{\partial^{2}\Phi}{\partial z^{\alpha}\partial\bar{z}^{\beta}}.$ The ``complex'' scalar curvature is $R=g^{\alpha\bar{\beta}} R_{\alpha\bar{\beta}}$ ($R$ is one-half of the usual ``real'' scalar curvature). Let $\rho(x)$ denote the Riemannian distance from $x$ to $0$ in $({\mathbb{C}}^{m},g)$. For a solution $u$ of~(\ref{eq:cma}), we derive the following equations and inequalities for those geometric quantities.

\subsection{Equation for phase $\Phi$}

Since $u$ is a solution of~(\ref{eq:cma}), the phase satisfies the equation $\Phi=\frac{1}{2}x\cdot Du-u$. Taking two derivatives,
\begin{equation}
\label{ineq:-Ric}
-R_{\alpha\bar{\beta}}=\frac{\partial^{2}\Phi}{\partial z^{\alpha}\partial \bar{z}^{\beta}}=\frac{1}{2}x\cdot Du_{\alpha\bar{\beta}}.
\end{equation}
Differentiating $\Phi=\ln\det(u_{\alpha\bar{\beta}})$,
\[
D\Phi=g^{\alpha\bar{\beta}}Du_{\alpha\bar{\beta}}.
\]
Combining these equations, we get
\begin{equation}
\label{eq:phase}
g^{\alpha\bar{\beta}}\frac{\partial^{2}\Phi}{\partial z^{\alpha} \partial \bar{z}^{\beta}}=\frac{1}{2}x\cdot D\Phi. 
\end{equation}
In particular, we have the important relation
\begin{equation}
\label{eq:radial}
R=-\frac{1}{2}x\cdot D\Phi.
\end{equation}

\subsection{Inequality for scalar curvature $R$}

Differentiating $R=-\frac{1}{2}x\cdot D\Phi$ twice and using $R_{\alpha \bar{\beta}}=-\frac{\partial^{2}\Phi}{\partial z^{\alpha}\partial\bar
{z}^{\beta}}$,
\begin{equation}
\label{eq:Hess of Scalar}
\frac{\partial^{2}R}{\partial z^{\alpha}\partial\bar{z}^{\beta}} = -\frac{\partial^{2}\Phi}{\partial z^{\alpha}\partial\bar{z}^{\beta}}-\frac
{1}{2}x\cdot D\frac{\partial^{2}\Phi}{\partial z^{\alpha}\partial \bar{z}^{\beta}}=R_{\alpha\bar{\beta}}+\frac{1}{2}x\cdot DR_{\alpha\bar{\beta}}.
\end{equation}
Also, differentiating $R=g^{\alpha\bar{\beta}}R_{\alpha\bar{\beta}}$,
\begin{equation}
DR=-g^{\alpha\bar{\gamma}}Du_{\bar{\gamma}\delta}g^{\delta\bar{\beta}}R_{\alpha\bar{\beta}}+g^{\alpha\bar{\beta}}DR_{\alpha\bar{\beta}}. \nonumber
\end{equation}
Hence by~(\ref{ineq:-Ric}),
\begin{align*}
\frac{1}{2}x\cdot DR  &  =-g^{\alpha\bar{\gamma}}\left(  \frac{1}{2}x\cdot Du_{\bar{\gamma}\delta}\right)  g^{\delta\bar{\beta}}R_{\alpha\bar{\beta}}+g^{\alpha\bar{\beta}}\frac{1}{2}x\cdot DR_{\alpha\bar{\beta}} \\
& = g^{\alpha\bar{\gamma}}\left(  R_{\bar{\gamma}\delta}\right)  g^{\delta \bar{\beta}}R_{\alpha\bar{\beta}}+g^{\alpha\bar{\beta}}\frac{1}{2}x\cdot DR_{\alpha\bar{\beta}}.
\end{align*}
Coupled with~(\ref{eq:Hess of Scalar}), we get
\[
g^{\alpha\bar{\beta}}\frac{\partial^{2}R}{\partial z^{\alpha}\partial\bar{z}^{\beta}}-\frac{1}{2}x\cdot DR=R-g^{\alpha\bar{\gamma}}g^{\delta\bar{\beta}}R_{\alpha\bar{\beta}}R_{\bar{\gamma}\delta}\leq R-\frac{1}{m}R^{2},
\]
or equivalently
\begin{equation}
\label{ineq:R}
g^{\alpha\bar{\beta}}\frac{\partial^{2}R}{\partial z^{\alpha}\partial\bar {z}^{\beta}}\leq\frac{1}{2}x\cdot DR+R-\frac{1}{m}R^{2}.
\end{equation}

\subsection{Inequality for distance $\rho$}

Fix a point $x\in{\mathbb{C}}^{m},$ and let $\rho=\rho(x)$. We assume that $x$ is not in the cut locus of $0$. Since $( {\mathbb{C}}^{m},g )$ is complete, there is a (unique) unit speed minimizing geodesic $\chi:[0,\rho]\rightarrow{\mathbb{C}}^{m}$ from $0$ to $x.$ We introduce a vector field $X(\tau)$ along $\chi(\tau)$ defined by $X=\chi^{\alpha}\frac{\partial}{\partial z^{\alpha}}+\chi^{\bar{\beta}}\frac{\partial}{\partial\bar{z}^{\beta}}$, where we regard $\chi \in {\mathbb{C}}^{m}$ as a tangent vector. Note that $X(0)=0$ and $X(\rho) = x^{i}\frac{\partial}{\partial x^{i}}$.

We proceed to compute the directional derivative $x\cdot D\rho(x)$ using the metric $g$:
\begin{align*}
x\cdot D\rho(x) & = \langle X(\rho),\nabla_{g}\rho \rangle_{g} = \langle X(\rho),\dot{\chi} (\rho) \rangle \\
& = \int_{0}^{\rho} \frac{d}{d\tau} \langle X(\tau), \dot{\chi}(\tau) \rangle d\tau = \int_{0}^{\rho} \langle \nabla_{\tau
} X(\tau),\dot{\chi}(\tau) \rangle d\tau,
\end{align*}
where the tangent vector $\dot{\chi}(\tau) = \frac{d}{d\tau}\chi$ and for simplicity of notation we have dropped the subscript $g$ in the inner product $\langle \, , \, \rangle_{g}.$ To calculate the above integrand, we first compute the covariant derivative of $X$ along $\chi$:
\begin{align*}
\nabla_{\tau}X  &  =\dot{\chi}^{\alpha}\frac{\partial}{\partial z^{\alpha}}+\dot{\chi}^{\bar{\beta}}\frac{\partial}{\partial\bar{z}^{\beta}} + \chi^{\alpha}\nabla_{\dot{\chi}}\frac{\partial}{\partial z^{\alpha}} + \chi^{\bar{\beta}}\nabla_{\dot{\chi}}\frac{\partial}{\partial\bar{z}^{\beta}} \\
& = \dot{\chi}+\chi^{\alpha}\Gamma_{\gamma\alpha}^{\mu}\dot{\chi}^{\gamma}\frac{\partial}{\partial z^{\mu}}+\chi^{\bar{\beta}}\Gamma_{\bar{\delta}
\bar{\beta}}^{\bar{\nu}} \dot{\chi}^{\bar{\delta}} \frac{\partial}{\partial\bar{z}^{\nu}}.
\end{align*}
Then using the identity $\Gamma_{\gamma\alpha}^{\mu}g_{\mu\bar{\beta}}=u_{\gamma\alpha\bar{\beta}}$ (for a K\"{a}hler potential) and~(\ref{ineq:-Ric}), we have
\begin{equation}
\langle\nabla_{\tau}X,\dot{\chi}\rangle=1+X\cdot Du_{\alpha\bar{\beta}} \dot{\chi}^{\alpha}\dot{\chi}^{\bar{\beta}}=1-2R_{\alpha\bar{\beta}}\dot{\chi}^{\alpha}\dot{\chi}^{\bar{\beta}}.\nonumber
\end{equation}
Therefore, we have the formula:
\begin{equation}
\label{eq:phase:directional}
x\cdot D\rho(x)=\rho(x)-\int_{0}^{\rho}2R_{\alpha\bar{\beta}}\dot{\chi}^{\alpha}\dot{\chi}^{\bar{\beta}}d\tau.
\end{equation}

We have the following estimate for the Laplacian of the distance function $\rho.$
\begin{lemma}
\label{lemma:dist:ineq}
Suppose $Ric\leq K$ on $B_{g}(0,\rho_{0})$ for $\rho_{0}>0.$ If $\rho(x)>\rho_{0}$ and $x$ is not in the cut locus of $0$, then
\begin{equation}
\label{dist:ineq}
g^{\alpha\bar{\beta}}\frac{\partial^{2}\rho}{\partial z^{\alpha}\partial \bar{z}^{\beta}}(x)\leq\left[  \frac{2m-1}{2\rho_{0}}+\frac{1}{3}K\rho
_{0}\right]  +\frac{1}{2}x\cdot D\rho(x)-\frac{1}{2}\rho(x).
\end{equation}
\end{lemma}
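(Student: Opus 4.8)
The plan is to run Perelman's second-variation comparison for the Laplacian of $\rho$ and then substitute the self-shrinker identity (\ref{eq:phase:directional}) in place of the usual pointwise Ricci lower bound. Since $x$ is not in the cut locus of $0$, the minimizing geodesic $\chi$ has no conjugate points strictly before $x$, so the Hessian of $\rho$ on the orthogonal complement of $\dot\chi$ is given by the index form of the Jacobi fields realizing it, and any competitor field with the same boundary data yields an upper bound. Concretely, I would extend $\dot\chi(\rho)$ to a $g$-orthonormal frame $\{\dot\chi,E_1,\dots,E_{2m-1}\}$ at $x$ and parallel transport the $E_i$ along $\chi$. For a piecewise-smooth $\phi$ with $\phi(0)=0$ and $\phi(\rho)=1$, the test fields $V_i=\phi E_i$ have the correct boundary values, and the index lemma gives the real Laplacian bound
\begin{equation*}
\Delta_g\rho(x)\leq\int_0^\rho\Big((2m-1)\dot\phi^2-\phi^2\,Ric(\dot\chi,\dot\chi)\Big)\,d\tau .
\end{equation*}
Here $\Delta_g=2g^{\alpha\bar{\beta}}\partial^2/\partial z^\alpha\partial\bar z^\beta$ on functions, so the left side is twice the complex Laplacian appearing in (\ref{dist:ineq}), while $Ric(\dot\chi,\dot\chi)=2R_{\alpha\bar{\beta}}\dot\chi^\alpha\dot\chi^{\bar{\beta}}$ is exactly the integrand already met in (\ref{eq:phase:directional}).

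Next I would make Perelman's choice of cutoff, $\phi(\tau)=\tau/\rho_0$ on $[0,\rho_0]$ and $\phi(\tau)=1$ on $[\rho_0,\rho]$. The gradient term then contributes $\int_0^{\rho_0}(2m-1)\rho_0^{-2}\,d\tau=(2m-1)/\rho_0$. The key manipulation for the curvature term is to keep, rather than discard, the full Ricci integral by writing $-\int_0^\rho\phi^2\,Ric\,d\tau=-\int_0^\rho Ric\,d\tau+\int_0^{\rho_0}(1-\phi^2)\,Ric\,d\tau$, using that $1-\phi^2$ is supported on $[0,\rho_0]$. By (\ref{eq:phase:directional}) the first integral equals $x\cdot D\rho(x)-\rho(x)$. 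For the second, $1-\phi^2=1-\tau^2/\rho_0^2\geq0$ on $[0,\rho_0]$, so the hypothesis $Ric\leq K$ on $B_g(0,\rho_0)$ gives $\int_0^{\rho_0}(1-\tau^2/\rho_0^2)\,Ric\,d\tau\leq K\int_0^{\rho_0}(1-\tau^2/\rho_0^2)\,d\tau=\tfrac{2}{3}K\rho_0$. Combining these, dividing by $2$ to pass from $\Delta_g\rho$ to the complex Laplacian, produces precisely (\ref{dist:ineq}).

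The step I expect to require the most care is the second-variation bound itself. One must verify that the $E_i$ indeed lie tangent to the geodesic spheres (which holds since $\nabla_g\rho=\dot\chi$ and we parallel transport an orthonormal frame), that the index lemma is applicable because there are no interior conjugate points, and that the two complex-to-real normalizations—the factor $2$ in $\Delta_g$ and the factor $2$ in $Ric(\dot\chi,\dot\chi)=2R_{\alpha\bar{\beta}}\dot\chi^\alpha\dot\chi^{\bar{\beta}}$—are tracked consistently so that (\ref{eq:phase:directional}) slots in exactly. The genuinely new ingredient, compared with the classical Laplacian comparison, is that the bulk Ricci integral is not thrown away via a global sign condition but is retained and identified with the self-similar quantity $x\cdot D\rho(x)-\rho(x)$; the cutoff $\phi$ serves only to localize the hypothesis $Ric\leq K$ to the small ball $B_g(0,\rho_0)$ where it is available.
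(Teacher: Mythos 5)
Your proposal is correct and reaches the paper's key intermediate estimate
\begin{equation*}
\Delta_{g}\rho \leq \frac{2m-1}{\rho_{0}} - \int_{0}^{\rho} Ric(\dot{\chi},\dot{\chi})\, d\tau + \int_{0}^{\rho_{0}} \Bigl(1-\frac{\tau^{2}}{\rho_{0}^{2}}\Bigr) Ric(\dot{\chi},\dot{\chi})\, d\tau
\end{equation*}
exactly, and from that point on your argument (substituting the identity $x\cdot D\rho(x)=\rho(x)-\int_{0}^{\rho}2R_{\alpha\bar{\beta}}\dot{\chi}^{\alpha}\dot{\chi}^{\bar{\beta}}d\tau$ for the bulk Ricci integral, bounding the localized integral by $\tfrac{2}{3}K\rho_{0}$ using $Ric\leq K$ on $B_{g}(0,\rho_{0})$, and halving to convert $\Delta_{g}$ to the complex Laplacian) coincides with the paper's. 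The difference is in how you derive the displayed estimate: you use the classical second-variation route --- parallel orthonormal frame, test fields $\phi E_{i}$ with Perelman's piecewise-linear cutoff, and the index lemma --- which is precisely the derivation the paper attributes to Perelman's Section 8. The paper instead takes a deliberately pointwise route: it writes the mean curvature $H=\frac{-1}{2m-1}\Delta_{g}\rho$ of the geodesic spheres, invokes Calabi's Riccati-type inequality $H_{\rho}\geq H^{2}+\frac{1}{2m-1}Ric(\nabla_{g}\rho,\nabla_{g}\rho)$, substitutes $H=-\frac{1}{\rho}+b$, and integrates $(\rho^{2}b)_{\rho}\geq\frac{\rho^{2}}{2m-1}Ric$ on $[0,\rho_{0}]$ and $H_{\rho}\geq\frac{1}{2m-1}Ric$ on $[\rho_{0},\rho]$; the introduction advertises this ODE argument as one of the note's side contributions. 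Your approach buys familiarity and avoids the asymptotic analysis of $b=O(\rho)$ near $\rho=0$, at the cost of invoking the variational machinery (index form and index lemma); the paper's buys a purely pointwise derivation that integrates a differential inequality directly. Both are complete proofs, and your accounting of the complex-to-real factors of $2$ is consistent.
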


\begin{proof}
The mean curvature $H$ of the geodesic sphere $\partial B_{g}(0,\rho)$ with respect to the normal $\nabla_{g}\rho$ equals $\frac
{-1}{2m-1}\Delta_{g}\rho.$ As calculated in~\cite[p.52]{Calabi}, $H$ satisfies the following differential inequality
\[
H_{\rho}\geq H^{2}+\frac{1}{2m-1}Ric( \nabla_{g}\rho,\nabla_{g} \rho ).
\]
Let $H=\frac{-1}{\rho}+b.$ Since the Riemannian metric $g$ is asymptotically Euclidean as $\rho \to 0,$ we know $b$ is bounded for small $\rho$ (in fact $O(\rho)$). We then have a corresponding inequality for $b$:
\[
\frac{1}{\rho^{2}}+b_{\rho}\geq\frac{1}{\rho^{2}}-2\frac{1}{\rho}b+b^{2} +\frac{1}{2m-1} Ric( \nabla_{g}\rho,\nabla_{g}\rho ),
\]
and consequently
\[
\left(  \rho^{2}b\right)  _{\rho}\geq\rho^{2}b^{2}+\frac{\rho^{2}}{2m-1} Ric( \nabla_{g}\rho,\nabla_{g}\rho ) \geq \frac{\rho^{2}}{2m-1} Ric(\nabla_{g} \rho, \nabla_{g}\rho ).
\]
Integrating along $\chi(\tau)$, we arrive at
\[
b\left( \chi(\rho_{0}) \right)  \geq \frac{1}{\rho_{0}^{2}} \int_{0}^{\rho_{0}}\frac{\tau^{2}}{2m-1} Ric( \dot{\chi},\dot{\chi
} ) d\tau.
\]
Then for $\rho \geq \rho_{0}$,
\begin{align*}
H\left( \chi( \rho ) \right) & = H \left( \chi(\rho_{0}) \right) + \int_{\rho_{0}}^{\rho}H_{\rho}d\tau \\
& \geq H\left( \chi(\rho_{0}) \right) + \int_{\rho_{0}}^{\rho} \frac{1}{2m-1} Ric\left(  \dot{\chi},\dot{\chi}\right) d\tau \\
& \geq\frac{-1}{\rho_{0}}+\frac{1}{\rho_{0}^{2}}\int_{0}^{\rho_{0}}\frac{\tau^{2}}{2m-1} Ric\left( \dot{\chi},\dot{\chi} \right) d\tau+\int_{\rho_{0}}^{\rho}\frac{1}{2m-1} Ric\left( \dot{\chi},\dot{\chi} \right) d\tau.
\end{align*}
Substituting back $\Delta_{g} \rho = -(2m-1) H$ and recalling $x=\chi(\rho)$, we obtain
\begin{equation}
\label{ineq:laplace:distance}
\Delta_{g} \rho \leq \frac{2m-1}{\rho_{0}} - \int_{0}^{\rho} Ric(\dot{\chi},\dot{\chi}) d\tau + \int_{0}^{\rho_{0}} \left(1-\frac{\tau^{2}}{\rho_{0}^{2}} \right) Ric(\dot{\chi},\dot{\chi}) d\tau,
\end{equation}
when $\rho\geq\rho_{0}$. In fact, this estimate was first derived in~\cite[Sec.8]{P} (by a second variation argument).

Note that the Riemannian Laplacian $\Delta_{g}=2g^{\alpha\bar{\beta}} \frac{\partial^{2}}{\partial z^{\alpha}\partial\bar{z}^{\beta}}$ and
$Ric(\dot{\chi},\dot{\chi})=2R_{\alpha\bar{\beta}}\dot{\chi}^{\alpha}\dot{\chi}^{\bar{\beta}}$. Then~(\ref{dist:ineq}) follows from combining~(\ref{eq:phase:directional}) with~(\ref{ineq:laplace:distance}).
\end{proof}

To prove Theorem~\ref{thm:main} we will also need an inequality for $\rho^{y}(x)=$ distance from $x$ to $y$ in $({\mathbb{C}}^{m},g).$ Following the previous argument and using $(X-y^{i}\frac{\partial}{\partial x^{i}})$ instead of $X$ for the vector field along $\chi,$ we have
\begin{lemma}
\label{lemma:dist:ineq:gen}
Suppose $Ric\leq K$ on $B_{g}(y,\rho_{0})$. Fix $A>\rho_{0},$ and let $x\in B_{g}(y,A)$. If $\rho^{y}(x)>\rho_{0}$ and $x$ is
not in the cut locus of $y$, then
\begin{equation}
\label{dist:ineq'}
g^{\alpha\bar{\beta}}\frac{\partial^{2}\rho^{y}}{\partial z^{\alpha} \partial\bar{z}^{\beta}}(x)\leq\left[  \frac{2m-1}{2\rho_{0}}+\frac{1}{3} K\rho_{0}\right]  +\frac{1}{2}x\cdot D\rho^{y}(x)-\frac{1}{2}\rho^{y}(x)+C_{0}A|y|,
\end{equation}
where the constant $C_{0}$ only depends on the ``Euclidean'' norms of $Du_{\alpha\bar{\beta}}$ and $g^{-1}$ in $B_{g}(y,A)$.
\end{lemma}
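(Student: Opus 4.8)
The plan is to mimic the proof of Lemma~\ref{lemma:dist:ineq} almost verbatim, replacing the distance-to-$0$ function $\rho$ by the distance-to-$y$ function $\rho^{y}$, and then to account carefully for the discrepancy caused by the fact that the self-similar vector field $X=\chi^{\alpha}\frac{\partial}{\partial z^{\alpha}}+\chi^{\bar{\beta}}\frac{\partial}{\partial\bar{z}^{\beta}}$ is anchored at the origin rather than at $y$. Concretely, I would let $\chi:[0,\rho^{y}]\to{\mathbb{C}}^{m}$ be the unit-speed minimizing geodesic from $y$ to $x$ and use the shifted vector field $Y(\tau)=X(\tau)-y^{i}\frac{\partial}{\partial x^{i}}$, so that $Y$ vanishes at $\tau=0$ (i.e.\ at $y$) and $Y(\rho^{y})=(x-y)^{i}\frac{\partial}{\partial x^{i}}$. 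This is exactly the adjustment the paragraph preceding the statement suggests.

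The computation of the directional derivative then splits into two pieces. Writing $Y=X-y^{i}\partial_{x^{i}}$, I would compute $\langle\nabla_{\tau}Y,\dot{\chi}\rangle=\langle\nabla_{\tau}X,\dot{\chi}\rangle-\langle\nabla_{\tau}(y^{i}\partial_{x^{i}}),\dot{\chi}\rangle$. The first term reproduces, exactly as in the proof of Lemma~\ref{lemma:dist:ineq}, the integrand $1-2R_{\alpha\bar{\beta}}\dot{\chi}^{\alpha}\dot{\chi}^{\bar{\beta}}$ via the identity $\Gamma_{\gamma\alpha}^{\mu}g_{\mu\bar{\beta}}=u_{\gamma\alpha\bar{\beta}}$ together with~(\ref{ineq:-Ric}); integrating over $[0,\rho^{y}]$ gives the analogue of formula~(\ref{eq:phase:directional}), namely $x\cdot D\rho^{y}(x)=\rho^{y}(x)-\int_{0}^{\rho^{y}}2R_{\alpha\bar{\beta}}\dot{\chi}^{\alpha}\dot{\chi}^{\bar{\beta}}d\tau + (\text{error})$, where the error is the contribution of the constant vector field $y^{i}\partial_{x^{i}}$. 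The Laplacian estimate~(\ref{ineq:laplace:distance}) applies verbatim to $\rho^{y}$ since it is purely a Riemannian computation using $Ric\le K$ on $B_{g}(y,\rho_{0})$; combining it with the modified directional-derivative formula yields~(\ref{dist:ineq'}), with the error term becoming the additive correction $C_{0}A|y|$.

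The main obstacle, and the only genuinely new point, is controlling that error term and showing it is bounded by $C_{0}A|y|$ with $C_{0}$ depending only on the Euclidean sizes of $Du_{\alpha\bar{\beta}}$ and $g^{-1}$ on $B_{g}(y,A)$. The covariant derivative $\nabla_{\tau}(y^{i}\partial_{x^{i}})$ produces Christoffel symbols $\Gamma=g^{-1}\cdot Du$ contracted against the constant vector $y$ and the geodesic velocity $\dot{\chi}$, so its inner product with $\dot{\chi}$ is schematically $y^{i}\cdot(g^{-1}Du_{\alpha\bar{\beta}})\cdot\dot{\chi}\cdot\dot{\chi}$; since $\dot{\chi}$ is $g$-unit, $|\dot{\chi}|_{g}=1$, and the length of the geodesic is at most $A$ (because $x\in B_{g}(y,A)$), integrating over $\tau\in[0,\rho^{y}]\subset[0,A]$ bounds the total error by a constant multiple of $A|y|$, where the constant absorbs $\sup|Du_{\alpha\bar{\beta}}|$ and $\sup|g^{-1}|$ over $B_{g}(y,A)$. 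The delicate bookkeeping is that $\dot{\chi}$ has unit $g$-norm but the factor $y^{i}$ is measured in the Euclidean norm, so one must convert between the two using precisely the bound on $|g^{-1}|$; this is where the stated dependence of $C_{0}$ enters, and it is why one restricts to the ball $B_{g}(y,A)$ rather than working globally.
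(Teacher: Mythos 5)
Your proposal is correct and follows essentially the same route as the paper's proof: the shifted vector field $X-y^{i}\frac{\partial}{\partial x^{i}}$ along the geodesic from $y$ to $x$, the resulting extra integral $\int_{0}^{\rho^{y}}y\cdot Du_{\alpha\bar{\beta}}\dot{\chi}^{\alpha}\dot{\chi}^{\bar{\beta}}\,d\tau$, and the combination with~(\ref{ineq:laplace:distance}) are exactly what the paper does. The only cosmetic imprecision is your ``schematic'' $g^{-1}Du$ for the error integrand: the $g^{-1}$ in the Christoffel symbol cancels against the metric in the inner product, so the integrand is exactly $y\cdot Du_{\alpha\bar{\beta}}\dot{\chi}^{\alpha}\dot{\chi}^{\bar{\beta}}$, and the dependence of $C_{0}$ on $g^{-1}$ enters only where you say it does, in converting the $g$-unit vector $\dot{\chi}$ to Euclidean norm.
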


\begin{proof}
Arguing as above, let $\chi$ be the unit speed minimizing geodesic from $y$ to $x$. Using $(X-y^{i}\frac{\partial}{\partial x^{i}})$ for the vector field along $\chi$ (note that $X(0)=y^{i}\frac{\partial}{\partial x^{i}}$), we have 
\[
(x-y)\cdot D\rho^{y}(x)=\rho^{y}(x)-\int_{0}^{\rho^{y}(x)}2R_{\alpha\bar{\beta}}\dot{\chi}^{\alpha}\dot{\chi}^{\bar{\beta}}d\tau-\int_{0}^{\rho
^{y}(x)}y\cdot Du_{\alpha\bar{\beta}}\dot{\chi}^{\alpha}\dot{\chi}^{\bar{\beta}}d\tau.
\]
The conclusion of the lemma follows, as above, from combining this equation with~(\ref{ineq:laplace:distance}).
\end{proof}

\section{Proof of Theorem~\ref{thm:main}}

First we prove the scalar curvature $R \geq 0$ on complete $({\mathbb{C}}^{m},g).$ Choose a cut-off function $\phi$ such that $\phi\equiv1$ on $[0,1]$, $\phi\equiv0$ on $[2,\infty)$, $\phi^{\prime}\leq0,$ $|\phi^{\prime}|\leq C_{1}\phi^{1/2},$ and $|\phi^{\prime\prime}-2\left(  \phi^{\prime}\right)^{2}/\phi|\leq C_{2}\phi^{1/2}$. For any small $\rho_{0}>0,$ $K=K(\rho_{0})$ can be chosen so that $Ric\leq K$ on $B_{g}(0,2\rho_{0})$. Fixing $A>\rho_{0}$, we derive an effective negative lower bound~(\ref{ineq:Reffective}) for $R$ on $B_{g}(0,A).$ Set $\tilde{R}=\phi(\rho/A)R.$ If $R<0$ at some point in $B_{g}(0,2A)$, then $\tilde{R}$ achieves a negative minimum at some point $p\in B_{g}(0,2A),$ as $\overline{B_{g}(0,2A)}$ is compact for each $A>0$ by the completeness of $({\mathbb{C}}^{m},g).$ We consider two cases.

Case 1: $p$ is not in the cut locus of $0.$ Then $\rho$ is smooth near $p$, and we have
\[
\Delta_{g}\tilde{R}=\left(  \frac{\phi^{\prime\prime}}{A^{2}}+\frac{\phi^{\prime}}{A}\Delta_{g}\rho\right)  R+\phi\Delta_{g}R+2\langle\nabla
\phi,\nabla R\rangle,
\]
where we have used $|\nabla\rho|=1.$ In order to have a linear differential inequality for $\tilde{R}$ with smooth coefficients (even for the Lipschitz function $\rho$), we rewrite
\begin{eqnarray*}
\langle\nabla\phi,\nabla R\rangle & = & \left\langle \nabla\phi,\frac{\nabla\tilde{R}}{\phi}-\frac{\nabla\phi}{\phi^{2}}\tilde{R}\right\rangle \, = \, \left\langle \nabla\frac{\tilde{R}}{R},\frac{\nabla\tilde{R}}{\phi}\right\rangle -\frac{\left\vert \nabla\phi\right\vert ^{2}}{\phi^{2}} \tilde{R} \\
& \overset{|\nabla\rho|=1}{=} & \frac{ |\nabla\tilde{R}|^{2}}{\tilde{R}}-\left\langle \frac{\nabla R}{R},\nabla\tilde{R}\right\rangle -\frac{\left(  \phi^{\prime}\right)  ^{2}}{A^{2}\phi}R \\
& \overset{\tilde{R}<0}{\leq} & -\left\langle \frac{\nabla R}{R},\nabla\tilde{R}\right\rangle -\frac{\left(  \phi^{\prime}\right)^{2}}{A^{2}\phi}R.
\end{eqnarray*}
Using $\Delta_{g}=2g^{\alpha\bar{\beta}}\frac{\partial^{2}}{\partial z^{\alpha}\partial\bar{z}^{\beta}}$, the inequalities~(\ref{dist:ineq}) and
(\ref{ineq:R}) for $\rho$ and $R$, and the inequalities for $\phi$, we get
\begin{eqnarray*}
g^{\alpha\bar{\beta}} \frac{\partial^{2}\tilde{R}}{\partial z^{\alpha}\partial\bar{z}^{\beta}} & \overset{\phi^{\prime}R>0}{\leq} & \frac{1}{2A^{2}}\left[  \phi^{\prime\prime}-\frac{2\left(  \phi^{\prime}\right)  ^{2}}{\phi}\right]  R+\frac{\phi^{\prime}R}{A}\left[  \left(  \frac{2m-1}{2\rho_{0}}+\frac{1}{3}K\rho_{0}\right)  +\frac{1}{2}x\cdot D\rho(x)\right]  \\
& & \, + \,\phi \left(\frac{1}{2}x \cdot DR+R-\frac{1}{m}R^{2} \right) - \left\langle \frac{\nabla R}{R},\nabla\tilde{R} \right\rangle \\
& \leq & \frac{C_{2}}{2A^{2}}\phi^{1/2}\left\vert R\right\vert +\frac{C_{1}}{A}\left( \frac{2m-1}{2\rho_{0}}+\frac{1}{3}K\rho_{0} \right) \phi
^{1/2} |R| -\frac{\phi R^{2}}{m}\\
&  & \, + \, \tilde{R} + \frac{1}{2}x\cdot D\tilde{R} - \left\langle \frac{\nabla R}{R},\nabla\tilde{R}\right\rangle \\
&  \leq & \frac{C(m,\rho_{0})}{A^{2}}+\tilde{R}+b(x)\cdot D\tilde{R},
\end{eqnarray*}
where $b(x)$ is a smooth function and $C(m,\rho_{0})$ is a constant that depends only on $m$, $\rho_{0}$, $C_{1}$, and $C_{2}$. Since $\tilde{R}$
achieves its minimum at $p$, we have $\tilde{R}(p)\geq-\frac{C(m,\rho_{0})}{A^{2}}$ and $R\geq-\frac{C(m,\rho_{0})}{A^{2}}$ on $B_{g}(0,A)$.

Case 2: $p$ is in the cut locus of $0$. Then $\rho$ is not smooth at $p$, and we argue using Calabi's trick \cite[p.53]{Calabi} of approximating $\rho$ from above by smooth functions (cf.~\cite[pp.453-456]{BookRicci1}). For completeness, we include the argument here. Let $\chi$ be a unit speed geodesic from $0$ to $p$ that minimizes length, and define $\rho_{\varepsilon}=\rho^{\chi(\varepsilon)}+\varepsilon$, where $\rho^{\chi(\varepsilon)}$ is the distance to $\chi(\varepsilon)$. Then $\rho_{\varepsilon}(p)=\rho(p)$ and $\rho_{\varepsilon}\geq\rho$ near $p$. Since $p$ is not in the cut locus of $\chi(\varepsilon)$, we know that $\rho_{\varepsilon}$ is smooth near $p$. Let $\tilde{R}_{\varepsilon}=\phi(\rho_{\varepsilon}/A)R$. Then $\tilde{R}_{\varepsilon}$ is smooth near $p$. Furthermore, since $\phi$ is decreasing and $R<0$ near $p$, the above properties of $\rho_{\varepsilon}$ show that $\tilde{R}_{\varepsilon}(p)=\tilde{R}(p)$ and $\tilde{R}_{\varepsilon} \geq\tilde{R}$ near $p$. It follows that $\tilde{R}_{\varepsilon}$ has a local minimum at $p$. Arguing as we did in Case 1, and using Lemma~\ref{lemma:dist:ineq:gen} to estimate $\rho_{\varepsilon}$, we have
\[
g^{\alpha\bar{\beta}}\frac{\partial^{2}\tilde{R}_{\varepsilon}}{\partial z^{\alpha}\partial\bar{z}^{\beta}}\leq\frac{C(m,\rho_{0})}{A^{2}}+\tilde
{R}_{\varepsilon}+b(x)\cdot D\tilde{R}_{\varepsilon}+\frac{\phi^{\prime}R}{A}\left[  C_{0}\ 2A\ |\chi(\varepsilon)|\right],
\]
where $b(x)$ is a smooth function, $C(m,\rho_{0})$ is a constant that depends only on $m$, $\rho_{0}$, $C_{1}$, and $C_{2}$, and $C_{0}$ is a constant depending on the ``Euclidean'' norms of $Du_{\alpha\bar{\beta}}$ and $g^{-1}$ in $\overline{B_{g}(0,2A)}$. Note that we may choose $C_{0}$ independent of (small) $\varepsilon$. At $p$ we have
\[
\tilde{R}_{\varepsilon}(p)\geq-\frac{C(m,\rho_{0})}{A^{2}}-\frac{\phi^{\prime}R}{A}(p)\left[ C_{0}\ 2A\ |\chi(\varepsilon)| \right].
\]
Taking $\varepsilon \to 0$, we arrive at the inequality $\tilde{R}(p)\geq-\frac{C(m,\rho_{0})}{A^{2}}$, which shows $R\geq-\frac{C(m,\rho_{0})}{A^{2}}$ on $B_{g}(0,A)$.

Combining Case 1 and Case 2, we have shown that
\begin{equation}
\label{ineq:Reffective}
R\geq-\frac{C(m,\rho_{0})}{A^{2}} \text{ on } B_{g}(0,A).
\end{equation}
Taking $A\rightarrow\infty$, we arrive at $R\geq0$ on ${\mathbb{C}}^{m}$.

Now, we finish the proof of Theorem~\ref{thm:main}. Since $R\geq0,$ it follows from the equation $R=-g^{\alpha\bar{\beta}}\frac{\partial^{2}\Phi}{\partial z^{\alpha}\partial\bar{z}^{\beta}}=-\frac{1}{2}x\cdot D\Phi$ that $\Phi$ achieves its global maximum at the origin. Applying the strong maximum principle to equation~(\ref{eq:phase}) we conclude that $\Phi$ is constant. Using $\frac{1}{2}x\cdot Du-u=\Phi$, we have
\[
\frac{1}{2}x\cdot D\left[ u + \Phi(0) \right] = u + \Phi(0).
\]

Finally, it follows from Euler's homogeneous function theorem that smooth $u + \Phi(0)$ is a homogeneous order two polynomial.

\bigskip

\textbf{Remark.} In fact, one sees that the\ Lipschitz function $\tilde{R}$ (on the set where $\tilde{R}<0$) is a subsolution to
\[
g^{\alpha\bar{\beta}}\frac{\partial^{2}\tilde{R}}{\partial z^{\alpha} \partial\bar{z}^{\beta}}\leq\frac{C(m,\rho_{0})}{A^{2}}+\tilde{R}+b(x)\cdot
D\tilde{R}
\]
in the viscosity sense by using the same trick of Calabi. It follows from the comparison principle (cf.~\cite[p.18]{CIL}) that the same negative lower bounds for $\tilde{R}$ and hence $R$ can be derived.

\end{document}